\newcommand{\src}{\mathcal{S}}
\newcommand{\snk}{\mathcal{T}}
\newcommand{\F}{\mathbb{F}}
\newcommand{\rk}{\mathsf{rank}}
\newcommand{\p}{\boxtimes} 
\newcommand{\inst}{node-capacitated multicommodity instance }
\newcommand{\insts}{node-capacitated multicommodity instances }
\newcommand{\supp}{\mathsf{supp}}
\newcommand{\krn}{\otimes}
\newcommand{\ind}[1]{\mathbb{1}_{#1}}
\newcommand{\cmat}{L}
\newcommand{\cols}{\mathcal{M}}
\newcommand{\csub}{D}
\newtheorem{thm}{Theorem}[section]
\newtheorem{lemma}[thm]{Lemma}
\newtheorem{cor}[thm]{Corollary}
\newtheorem{obs}[thm]{Observation}
\newtheorem{definition}[thm]{Definition}
\begin{document}

\title{Multicut Lower Bounds via Network Coding}

\author{\IEEEauthorblockN{Anna Blasiak}
\IEEEauthorblockA{Cornell University}}

\maketitle

\begin{abstract}
We introduce a new technique to certify lower bounds on the multicut size using network coding.  In directed networks the network coding rate is not a lower bound on the multicut, but we identify a class of networks on which the rate is equal to the size of the minimum multicut and show this class is closed under the strong graph product.  We then show that the famous construction of Saks et al. that gives a $\Theta(k)$ gap between the multicut and the multicommodity flow rate is contained in this class.  This allows us to apply our result to strengthen their multicut lower bound, determine the exact value of the minimum multicut, and give an optimal network coding solution with rate matching the multicut. 
\end{abstract}
\section{Introduction}

The multicut problem is a fundamental graph partitioning problem in which we are given a network with $k$ source-sink pairs and we are asked to find the minimum size edge set that, when removed, disconnects all source-sink pairs.  It has applications in network robustness where we may want guarantees that the multicut is large implying our network will still be connected even after the failure of many edges.  Alternatively, we may want to compute a small multicut in order to determine an efficient way to stop the spread of a contagion in a network.  

The problem is known to be NP-hard to compute and even NP-hard to approximate \cite{dahlhaus1994complexity, chuzhoy}, and so the focus of previous work has been on approximation algorithms.  
As is the case with many graph problems, the directed version seems far more difficult than the undirected one.  The best approximation algorithm for undirected graphs is $O(\min(\log n, \log k))$  \cite{LeightonRao, garg-approx}.  But for directed graphs, the best approximation is $\tilde{O}(n^{11/23})$ \cite{cut-approx}.


All of the approximation algorithms \cite{gupta2003improved, cut-approx, cheriyan2001approximating} to date use the natural linear programming relaxation.  The dual of the linear program is the maximum multicommodity flow problem, which looks to maximize the total flow that can be sent between the source-sink pairs.  This technique is limited by the integrality gap of this linear program, also called the flow-cut gap, 
and this gap is known to be large.  
In undirected graphs the flow-cut gap is equal to the best known approximation ratio, $\Theta(\min(\log k, \log n))$ \cite{LeightonRao, garg-approx}.  For directed graphs, the paper of Saks et al. \cite{Saks} shows that the trivial upper bound of $k$ on the flow-cut gap is tight up to constant factors.  Recently, Chuzhoy et al. showed that the gap is large when parameterized by $n$ as well, and is $\tilde{\Omega}(n^{1/7})$ \cite{chuzhoy}.  Thus, the lower bound given by the maximum multicommodity flow problem isn't strong enough to allow for improved approximation algorithms in the undirected case or in the directed case parameterized by $k$.

The focus of this work is to consider the possibility of a stronger lower bound via the rate of the network coding version of the maximum multicommodity flow problem.  As a generalization of flow, network coding's rate is at least the flow rate and could perhaps yield better lower bounds.  Such a technique has been extremely successful in the multicast problem;  the coding rate there is known to be easy to compute and equal to the cut bound, unlike the flow solution which can be much smaller \cite{li2003linear, Jaggi, agarwal2004advantage}.   

The obstacles to a lower bound via network coding for the multicut problem differ between undirected and directed settings.  In undirected graphs, the coding rate is a lower bound \cite{Harvey2006capacity}, but it is unlikely to lead to improved lower bounds as the network coding rate is conjectured to be equal to the flow rate \cite{li2004network}.  In directed graphs, the coding rate can be a factor $k$ larger than the flow rate \cite{li2004network, harvey2004comparing}, so it would have potential to give a tight lower bound.  However, here the network coding rate is not a lower bound on the multicut and can even be a factor $k$ larger than the cut \cite{ACLY}.

Though the multicut isn't an upper bound on the coding rate, there exist many related cut upper bounds.  An easy entropy argument shows that a cut that disconnects all sinks from all sources is an upper bound on the network coding rate, and there has been work devoted to expanding that idea with more complicated entropy arguments \cite{harvey2004comparing, harvey2005tighter, KramerSavari, song2003zero}.  But, to our knowledge, there has been no prior work investigating conditions under which the network coding rate is a lower bound on the multicut in directed graphs, and as such, it is the primary focus of our contribution. 

\subsection{Our Contributions}
In this paper we introduce a new technique to certify lower bounds on the multicut size using network coding.  
We identify a property of a linear network code that guarantees the code is a lower bound on the multicut.   We also show that for the strong graph product of any two networks with such codes, this property is preserved.  The following theorem describes one consequence of our main result:

\begin{thm}
Given a network $G$ in which the optimal multicommodity flow solution consists of a set of node-disjoint paths, there is a product operation in which the optimal network coding rate is equal to the minimum multicut in the $k$-fold product of $G$.  
\label{thm:informal}
\end{thm}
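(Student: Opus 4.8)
\emph{Proof plan.} The statement will follow by combining the two ingredients that our main result provides: (i) a combinatorial property of a linear network code — being \cert{} — under which the code's rate is a valid lower bound on the size of the minimum multicut; and (ii) closure of this property under the strong product $\p$, so that \cert{} codes of rates $r_G$ on $G$ and $r_H$ on $H$ combine into a \cert{} code of rate $r_G r_H$ on $G \p H$ (equipped with the induced source--sink pairs). Equivalently, the class of networks whose optimal coding rate equals the minimum multicut is closed under $\p$. The plan is therefore two-fold: (1) show that a network $G$ whose optimal multicommodity flow is a family of node-disjoint paths lies in this class, and (2) apply the closure in (ii) repeatedly to lift the conclusion to the $k$-fold product $G^{\p k}$.

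\emph{Step 1: the base network.} Let $P_1,\dots,P_m$ be the node-disjoint paths carrying the optimal flow, so the flow rate of $G$ is $m$. Sending one independent symbol along each $P_j$ is a linear (indeed routing) network code of rate $m$, and the first thing I would verify is that the node-disjointness of the $P_j$ is exactly the hypothesis needed for this code to be \cert{} in the sense of our main result; then (i) yields $\mathrm{multicut}(G)\ge m$. For the reverse inequality I would use optimality of the flow to produce a matching cut of size $m$: deleting one appropriately chosen object (edge, or node, depending on the capacity model) from each $P_j$ severs every source from every corresponding sink, so $\mathrm{multicut}(G)\le m$, and by the standard entropy argument this same ``all sources--all sinks'' cut also bounds the coding rate of $G$ by $m$. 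Hence $\mathrm{multicut}(G)$ and the optimal coding rate of $G$ both equal $m$, i.e. $G$ lies in the desired class, witnessed by the routing code.

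\emph{Step 2: the $k$-fold product.} Applying (ii) to $k$ copies of $G$, the network $G^{\p k}$ carries a \cert{} code of rate $m^k$ — the product of the $k$ routing codes — so (i) gives $\mathrm{multicut}(G^{\p k})\ge m^k$. Conversely, the $m^k$ products $P_{j_1}\p\cdots\p P_{j_k}$ of base paths are pairwise node-disjoint source--sink paths in $G^{\p k}$, and the corresponding size-$m^k$ cut shows $\mathrm{multicut}(G^{\p k})\le m^k$ while simultaneously capping the coding rate at $m^k$. Therefore the optimal coding rate and the minimum multicut of $G^{\p k}$ both equal $m^k$, which is the asserted equality; the product of the $k$ routing codes is the promised optimal code of matching rate. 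Note that $m^k$ typically dwarfs the multicommodity flow rate of $G^{\p k}$, which is how this recovers and strengthens the Saks et al. gap.

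\emph{Main obstacle.} The cut constructions and the bookkeeping $m\mapsto m^k$ (exact, because products of node-disjoint paths remain node-disjoint) are routine. The crux is the claim in Step 1 that routing along node-disjoint paths is a \cert{} code: this forces one to pin down precisely which structural feature of a linear code the \cert{} property isolates and to check that path-disjointness supplies it, and it is also where the whole argument must be reconciled with the node-capacity model, so that the cut appearing in (i), the matching cut of Step 1, and the entropy bound all refer to the right kind of object. I expect the bulk of the real work — here and in the companion main theorem — to be in defining the \cert{} property so that it is at once strong enough for (i) and robust enough for (ii).
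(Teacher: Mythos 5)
Your Step~1 is essentially right and matches the paper (Observations~\ref{obs:dis1} and~\ref{obs:dis2}: routing along node-disjoint paths is a linear code that is decodable with rate $m$ and $m$-certifiable). The problem is Step~2, and it is not a small one: the composition you assume in ingredient (ii) — certifiable codes of rates $r_G$ and $r_H$ combine into one of rate $r_G r_H$, so that multicut and coding rate of $G^{\p k}$ both equal $m^k$ — is not what the product operation of this theorem does, and the value $m^k$ is wrong. In the product, each factor's sources and sinks are re-attached to entire slabs, $f(s)=f_1(s)\times V_2$ for $s\in\src_1$ and symmetrically, so a single source--sink pair of $G^{\p k}$ is joined by far more paths than the $m^k$ ``product paths,'' and deleting one node from each $P_{j_1}\p\cdots\p P_{j_k}$ is nowhere near a multicut. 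Correspondingly, Theorem~\ref{thm:mainresult} composes rates additively with an inclusion--exclusion correction, not multiplicatively: the product code has matrix $\left[I_{n_1}\krn \cmat_2,\ \cmat_1\krn I_{n_2}\right]$, is decodable with rate $n_1p_2+n_2p_1-p_1|f_2(\snk_2)|$ and is $\left(n_1\rho_2+n_2\rho_1-\rho_1|f_2(\src_2)|\right)$-certifiable, and iterating gives $n^k-(n-m)^k$, not $m^k$. The motivating example exposes the failure: for $\mathcal{P}_n$ (one source--sink pair, $m=1$) your argument would bound the multicut of the Saks graph by $1$, whereas the true common value of multicut and coding rate is $n^k-(n-1)^k$ (Corollary~\ref{cor:saks}); if both quantities were $m^k$ the theorem would add nothing over the flow bound, while its whole point is a coding rate a factor $\approx k$ above the flow.

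Your upper bounds fail for the same reason: the size-$m^k$ set you propose neither disconnects the slab-attached pairs nor separates all sources from all sinks, so it bounds neither the multicut nor the coding rate. In the paper the matching upper bound is the set $f(\snk)$ of the product, whose size is exactly $n^k-(n-m)^k$ precisely because of the extra hypothesis $|f(\src)|=|f(\snk)|=m$ (stated in the corollary following Corollary~\ref{cor:saks} but suppressed in the informal Theorem~\ref{thm:informal}); it is simultaneously a multicut and an all-sources/all-sinks cut, hence also caps the coding rate. Finally, the lower-bound mechanism is not ``take the Kronecker product of two certifiable codes'': the proof of Theorem~\ref{thm:mainresult} builds, for an arbitrary multicut $M$ of the product, a matrix $\cmat^T B$ in the column space of $\cmat^T I_M$ that is block lower triangular, using Lemma~\ref{lem:extracut} (every multicut of $N_1\p N_2$ contains $K_1(u)\times M_u$ for some multicut $M_u$ of $N_2$) and the clique condition in the definition of certifiability; that is where the count $n_1\rho_2+n_2\rho_1-\rho_1|f_2(\src_2)|$ comes from. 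So the base case of your plan survives, but the closure statement, the cut construction, and the final value in Step~2 all need to be replaced by this machinery.
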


By applying this theorem to a directed path of length $n$ with source and sink at the ends, we give a new lower bound on the multicut in the construction of Saks et al.  Our proof strengthens Saks's result and provides a tight lower bound on the multicut (see Corollary \ref{cor:saks}).  Further, it constructs an elegant network coding solution for the construction that has rate equal to the multicut and a $k - o(k)$ factor larger than the multicommodity flow rate.




\section{Preliminaries}

We begin by defining the class of networks for which we analyze the multicut and network coding rates.  The definition is tailor-made for graph products.

\begin{definition}
A \emph{\inst} is given by a tuple $N = (G, \src,\snk, f)$ where $G=(V,E)$ is an undirected graph, $\src$ and $\snk$ are an ordered list of sources and sinks (separate from $G$) such that the $i^{th}$ source and sink are paired, and $f: \src \cup \snk \mapsto 2^V $ is a function that maps each source and sink to a subset of nodes.  The instance network can be formed by adding nodes for each element in $\src$ and $\snk$ to $G$ and adding directed edges $(s,v)$ for all $s \in \src, v \in f(s)$ and $(u,t)$ for all $t \in \snk, u \in f(t)$.  We reserve $n$ to denote $|V|$.\label{def:inst}
\end{definition}

It is easier for us to work with node-capacitated networks, but any node-capacitated network can be transformed into an equivalent edge-capacitated network by replacing each node with two nodes with a single directed edge between them.  For this reason, even though the graph $G$ is undirected, the network we are considering is far from undirected.

We will show that under certain conditions linear network codes and multicuts in these network instances can be composed under the following product operation.

\begin{definition}
The strong product of two instances $N_1 = (G_1, \src_1, \snk_1,f_1)$ and $N_2 = (G_2, \src_2, \snk_2, f_2)$ is the instance $N_1 \p N_2 = (G_1 \p G_2, \src, \snk, f)$ where $G_1 \p G_2$ is the strong graph product of $G_1 = (V_1, E_1)$ and $G_2 = (V_2, E_2)$: 
$$\begin{aligned}
V(G_1\p G_2) &= V_1 \times V_2\\ 
E(G_1\p G_2) & = \{ ((u,v), (u',v')) | (u,v) \ne (u',v')\\
 & \phantom{space} u=u' \text{or } (u,u') \in E_1, \\
 & \phantom{space} v=v' \text{or } (v,v')\in E_2 \}.
\end{aligned}$$
The set of sources $\src = \src_1 \cup \src_2$.
 The function $f$ is defined by 
$$f(s) = 
\begin{cases}
f_1(s) \times V_2 & \mbox{if $s \in \src_1$} \\
V_1 \times f_2(s) & \mbox{if $s \in \src_2$} 
\end{cases}$$
The sinks $\snk$ and function $f(\snk)$ are defined in the corresponding manner.   
\end{definition} 

Our analysis relies heavily on matrices and we now define the notation and important definitions.  Let $A[i,j]$ denote the $(i,j)^{th}$ entry of $A$, $A[i, -]$ the $i^{th}$ row, and $A[-,j]$ the $j^{th}$ column.  Correspondingly, for a vector $v$, let $a[i]$ denote the $i^{th}$ entry of $a$.  

\begin{definition}
The Kronecker product of a $p \times q$ matrix $A$ and $p' \times q'$ matrix $B$ is a $pp' \times qq'$ matrix
$$A \krn B = \left[
\begin{matrix} 
a[1,1] B & \cdots & a[1,q]B \\ 
\vdots & \ddots & \vdots \\ 
a[p,1]B & \cdots & a[p,q]B 
\end{matrix} \right].$$
\end{definition} 

\begin{definition}
The \emph{support} of a vector $v \in \F^{|A|}$, denoted $\supp(v)$, with entries indexed by the set $A$ is the subset $A' \subseteq A$ such that $v[a] \ne 0$ iff $a \in A'$.  In other words, $\supp(v)$ is the support of the function $f: A \mapsto \F$ such that $f(a) = v[a]$.
\end{definition}

We will overload functions defined on elements of sets to also be defined on subsets.  For a function $f:2^A \mapsto 2^B$ and a subset $A' \subseteq A$, we define $f(A') := \bigcup_{a \in A'} f(a)$.   For a function $f: 2^A \mapsto \mathbb{R}$,  we define $f(A') := \sum_{a \in A'} f(a)$.  Often we will use the additional shorthand of denoting $f(A)$ by $f$. 

\section{Codes and Cuts}

There are some subtleties to defining network coding solutions in graphs with cycles \cite{ARL-thesis}.  To avoid these issues we restrict our definition of a network code to  include an ordering on nodes that specifies possible dependencies between message vectors.  

\begin{definition}
A \emph{linear network code} $(\F, r, \pi, \cmat)$ of a \inst $((V,E),\src, \snk,f)$ specifies a finite field $\F$, a function $r(s): \src \mapsto \mathbb{N}$, an ordering $\pi:V \mapsto [n]$ on nodes in $V$, and a $n \times r(\src)$ coding matrix $\cmat$.  The rows of $\cmat$ are labeled with vertices $V$ and the columns by messages $\cols := \bigcup_{s \in \src} \cols(s)$, where $\cols(s) := \{(s,1),\ldots, (s,r(s))\}$.  Defining $N(v)$ to be  $\{v\} \cup \{u \in V | \pi(u) < \pi(v), (u,v) \in E\}$, we have that: 

For $v \in V$, $\exists a_v \in \F^{1\times n}$ such that 
\begin{enumerate}
\item $\{v\} \subseteq \supp(a_v) \subseteq N(v),$
\item $\supp(a_v \cmat) \subseteq \cols(f^{-1}(v)).$
\end{enumerate}

\label{def:netcod}
\end{definition}

The $v^{th}$ row of the matrix $\cmat$ describes the linear combination over $\F$ of messages that are sent by node $v$ to all its neighbors in the code.  The existence of vector $a_v$ guarantees that $v$ can compute this linear combination using the messages of adjacent nodes that come earlier in the ordering $\pi$.  In particular, node $v$ can determine its message using $\frac{1}{a_v[v]}\sum_{v' \in N(v)\setminus \{v\}} a_{v}[v']\cmat[v',-]$ and the information from the sources entering node $v$.  

\begin{definition}
A linear network code $(\F, r, \pi, \cmat)$ of a \inst $((V,E),\src, \snk,f)$ is \emph{decodable} with rate $p$ if there is a subset $\csub$ of messages $\cols$ of $\cmat$ with $|\cols| - |\csub| = p$ such that:

For each message $m = (s_i,j) \in \cols \setminus \csub$, $\exists d_{m} \in \F^{1 \times n}$ such that \begin{enumerate}
\item $\supp(d_{m})\subseteq f(t_i)$
\item $\{m\} \subseteq \supp(d_{m} \cmat) \subseteq \{m\} \cup \csub.$  
\end{enumerate}
\label{def:decode}
\end{definition}

Definition \ref{def:decode} guarantees that for for a message $m \in \cols(s_i)$, the sink $t_i$ can decode $m$ assuming that the messages in $\csub$ are fixed and known to all the receivers.  The idea that we can set some messages as fixed is an unusual, but natural generalization of the standard way to describe a linear code.  It will allow us to write the coding matrices in a much nicer form. 

\begin{obs}
\label{obs:dis1}
A network code that sends source messages along $p$ node-disjoint paths is a linear network code that is decodable with rate $p$.
\end{obs}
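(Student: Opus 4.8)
The plan is to write the routing solution down explicitly in the language of Definitions~\ref{def:netcod} and~\ref{def:decode} and verify the conditions directly. Let $P_1,\dots,P_p$ be the node-disjoint paths, where $P_\ell$ is a simple path in $G$ carrying one source message from source $s_{i_\ell}$ to its paired sink $t_{i_\ell}$; write $x_\ell$ and $y_\ell$ for its first and last vertices, so $x_\ell\in f(s_{i_\ell})$ and $y_\ell\in f(t_{i_\ell})$. Relabel the messages so that $\cols=\{m_1,\dots,m_p\}$ with $m_\ell$ the message sent along $P_\ell$, and set $r(s)=|\{\ell : s_{i_\ell}=s\}|$, so that $r(\src)=p$. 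Orienting each $P_\ell$ from $x_\ell$ to $y_\ell$ gives a set of arcs on $V$ in which, by vertex-disjointness of the $P_\ell$, every vertex is the tail of at most one arc and the head of at most one; this arc set is therefore a disjoint union of directed paths, hence acyclic, and extends to a total order $\pi$ on $V$. Finally, take $\F$ arbitrary and let $\cmat$ be the $n\times p$ matrix whose row indexed by $v$ is $\ind{\{m_\ell\}}$ if $v$ lies on $P_\ell$ (a unique such $\ell$) and is $0$ otherwise.

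I would then verify that $(\F,r,\pi,\cmat)$ is a linear network code by a case analysis on $v\in V$. If $v$ is interior to some $P_\ell$ with predecessor $u$, then $(u,v)\in E$ and $\pi(u)<\pi(v)$, so $u\in N(v)$, and $a_v=\ind{\{v\}}-\ind{\{u\}}$ has support $\{u,v\}\subseteq N(v)$, is nonzero at $v$, and satisfies $a_v\cmat=\cmat[v,-]-\cmat[u,-]=0$, so condition~2 holds vacuously. If $v=x_\ell$ for some $\ell$, then $s_{i_\ell}\in f^{-1}(v)$ and $a_v=\ind{\{v\}}$ gives $a_v\cmat=\ind{\{m_\ell\}}$ with $m_\ell\in\cols(s_{i_\ell})\subseteq\cols(f^{-1}(v))$. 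If $v$ lies on no path, then $\cmat[v,-]=0$ and $a_v=\ind{\{v\}}$ works again. In every case $\{v\}\subseteq\supp(a_v)\subseteq N(v)$. For decodability, take $\csub=\emptyset$, so $|\cols|-|\csub|=p$; for $m=m_\ell$ the vector $d_m=\ind{\{y_\ell\}}$ is supported on $\{y_\ell\}\subseteq f(t_{i_\ell})$ and $d_m\cmat=\cmat[y_\ell,-]=\ind{\{m_\ell\}}$, so $\supp(d_m\cmat)=\{m\}$, which lies between $\{m\}$ and $\{m\}\cup\csub$. Hence the code is decodable with rate $p$.

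I do not expect a genuine obstacle here: the substance of the observation is merely that the network-coding formalism is general enough to subsume plain routing, and the verification is a mechanical substitution into the two definitions. The one place that actually uses the hypothesis is the construction of the node ordering $\pi$, and this is exactly where node-disjointness is needed — two paths passing through a common vertex in opposite directions would force $\pi(u)<\pi(v)$ and $\pi(v)<\pi(u)$ for the two incident pairs and make a consistent ordering impossible.
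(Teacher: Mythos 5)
Your construction is exactly the paper's: the coding matrix whose column for each path is the indicator vector of that path, with $\csub=\emptyset$, so the proposal is correct and takes essentially the same approach — you have simply spelled out the ordering $\pi$, the vectors $a_v$, and the decoding vectors $d_m$ that the paper's one-line proof leaves implicit.
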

\begin{proof}
The matrix $\cmat$ has a column for each path that is an indicator vector for the path, and the set $D = \emptyset$.
\end{proof}


\begin{definition}
A \emph{multicut} of a \inst $N = ((V,E), \src, \snk, f)$ is a subset of nodes $M \subseteq V$ such that removing the vertices of $M$ from $N$ disconnects all paths between all $s_i - t_i$ pairs.  
\end{definition}

It will be convenient for us to represent subsets of the vertices of a network in terms of an \emph{indicator matrix}.  For a subset $A \subseteq V$, the matrix $I_A$ will be a $n \times |A|$ matrix with rows indexed by nodes $v \in V$  and columns indexed by nodes $w \in A$ where entry $[v,w] = 1$ if $v = w$ and zero otherwise.  

\begin{definition}
We call a linear network code $C = (\F, r, \pi, \cmat)$ of a \inst $N$ \emph{$\rho$-certifiable} if 

\begin{enumerate} 
\item There are cliques $K(v) \subseteq N(v), \; \forall \; v \in V$ such that $C$ continues to satisfy all of the properties prescribed in the definition of a linear network code (Definition~\ref{def:netcod}) if we replace all occurrences of $N(v)$ in that definition with $K(v)$ for all $v \in V$.
\item For any multicut $M$ of $N$, $\rk(\cmat^T I_M) \ge \rho$. 
\end{enumerate}

\label{def:cert}
\end{definition}

The certifiable property implies that $\rho$ is a lower bound on the size of the multicut: $|M| = \rk(I_M) \ge \rk(\cmat^T I_M) \ge \rho$.  The restriction on the coding matrix given by property 1 will allow us to compose together certifiable coding matricies to get a coding matrix that is certifiable for $N_1 \p N_2$ as well.  Notice that we don't need the matrix to be decodable with any rate for it to be certifiable.

\begin{obs}
Any coding solution consisting of $r$ disjoint paths is $r$-certifiable.  
\label{obs:dis2}
\end{obs}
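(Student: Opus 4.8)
The plan is to verify the two conditions of Definition~\ref{def:cert} directly for the code exhibited in the proof of Observation~\ref{obs:dis1}: the code in which $r$ source messages travel along node-disjoint paths $P_1,\dots,P_r$, where $P_j$ joins some matched pair $s_i$--$t_i$, and $\cmat$ is the $n\times r$ matrix whose column corresponding to $P_j$ is the indicator vector of the internal vertices of $P_j$. The one preparatory choice is the ordering $\pi$: fix any $\pi$ that is increasing along each $P_j$. This is possible precisely because the paths are node-disjoint, so the partial order they impose has no conflicts; order the vertices lying on no path arbitrarily afterwards.

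For condition~1 I would supply, for each $v\in V$, a witness vector $a_v$ and a clique $K(v)\subseteq N(v)$. If $v$ lies on no path then $\cmat[v,-]=0$, and taking $a_v$ to be the standard basis row vector supported on $v$ with $K(v)=\{v\}$ works, since then $a_v\cmat=0$ and Definition~\ref{def:netcod}(2) holds vacuously. If $v$ is the first internal vertex of a path $P_j$ leaving a source $s$, then $v\in f(s)$, so $\cmat[v,-]$ is the indicator of a column in $\cols(s)\subseteq\cols(f^{-1}(v))$; again the basis vector at $v$ and $K(v)=\{v\}$ suffice. In the remaining case $v$ has a predecessor $u$ on its path $P_j$, and by the choice of $\pi$ we have $\pi(u)<\pi(v)$ while $(u,v)\in E$, so $K(v):=\{u,v\}$ is a clique contained in $N(v)$; taking $a_v$ to be the difference of the basis vectors at $v$ and $u$ gives $a_v\cmat=\cmat[v,-]-\cmat[u,-]=0$ since both rows are the indicator of the column of $P_j$, so Definition~\ref{def:netcod}(2) again holds vacuously and (1) holds by construction. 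Thus all of Definition~\ref{def:netcod} is satisfied with $N(v)$ replaced by the cliques $K(v)$.

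For condition~2, fix a multicut $M$ of $N$. Since removing $M$ must disconnect the source--sink pair joined by $P_j$, and the only vertices of $P_j$ lying in $V$ are its internal vertices, $M$ must contain at least one internal vertex $w_j$ of $P_j$ for every $j$. For that vertex $\cmat[w_j,-]$ is the standard basis vector of $\F^{r}$ indicating the column of $P_j$, so the rows of $\cmat$ indexed by $\{w_1,\dots,w_r\}$ are linearly independent. Consequently the columns of $\cmat^T I_M$ indexed by $w_1,\dots,w_r$ are the $r$ standard basis vectors of $\F^r$, so $\rk(\cmat^T I_M)\ge r$, which is condition~2 with $\rho=r$.

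I do not expect a genuine obstacle: the whole argument is a bookkeeping verification. The only points needing care are (i) checking that $\pi$ can be chosen so that each witness clique $K(v)$ actually lies inside $N(v)$ — this is exactly where node-disjointness of the $P_j$ is used — and (ii) handling the two degenerate families of vertices (those off every path, and the first internal vertex of each path) so that condition~1 of Definition~\ref{def:cert} is established for \emph{every} $v\in V$, not merely the generic ones.
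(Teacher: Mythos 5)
Your proof is correct and follows essentially the same route as the paper's: the multicut must meet every path while node-disjointness makes the relevant rows/columns of $\cmat^T I_M$ behave like standard basis vectors, giving $\rk(\cmat^T I_M)\ge r$, and the clique condition is witnessed by the $2$-clique consisting of a path vertex and its predecessor. You are merely more explicit about the ordering $\pi$ and the degenerate vertices (those off every path and the first vertex of each path), which the paper leaves implicit.
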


\begin{proof}
Let $(\F,r,\pi,\cmat)$ be the linear code describing the disjoint path solution.

Observe that $(\cmat^TI_M)[i,j] \ne 0$ iff path $i$ intersects node $j$ of $M$.  $M$ is a multicut, so no row $(\cmat^TI_M)[i,-]$ can be the zero vector.  Further, the paths are disjoint, so each column $(\cmat^TI_M)[-,j]$ can have at most one non-zero entry.  Thus, $\rk(\cmat^TI_M) = r,$ the number of rows in $\cmat^T$.  Further, if $v$ belongs to a disjoint path $P$ then $v$ can compute its message using only its predecessor in $P$, thus Definition \ref{def:netcod} will still hold if we use the subset of $N(v)$ consisting of $v$ and its predecessor in $P$, a 2-clique.
\end{proof}


\section{Preserving Properties in Products}

Our main theorem shows how to combine linear network codes in two networks to obtain a linear network code in their product, preserving both decodability and certifiability.

\begin{thm}
Let $$N_1=(G_1=(V_1,E_1), \src_1, \snk_1, f_1) \text{   and}$$ $$N_2=(G_2=(V_2,E_2), \src_2, \snk_2, f_2)$$ be \insts with linear coding solutions $C_1= (\F, r_1, \pi_1, \cmat_1)$ and $C_2 = (\F, r_2, \pi_2, \cmat_2)$.

There is a linear network coding solution $C$ for $N_1 \p N_2$ with coding matrix $\left[I_{n_1} \krn \cmat_2, \cmat_1 \krn I_{n_2} \right ]$ such that:

\begin{enumerate}

\item If $C_1$ and $C_2$ are decodable with rates $p_1, p_2$ respectively then and $C$ is decodable with rate $p := n_1p_2+n_2p_1 - p_1|f_2(\snk_2)| $. 

\item If $C_1$ and $C_2$ are $\rho_1$ and $\rho_2$ certifiable respectively, then $C$ is $\rho$-certifiable, $\rho := \left(n_1\rho_2+n_2\rho_1 - \rho_1|f_2(\src_2)|\right)$, for $N_1 \p N_2$.

\end{enumerate}

\label{thm:mainresult}
\end{thm}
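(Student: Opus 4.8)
Here is how I would attack the statement; the construction of the product code is common to both parts, so I would fix that first, then verify decodability and certifiability in turn.

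\medskip
\noindent\emph{The product code.} Keep the field $\F$, let $\pi$ be the lexicographic order on the pairs $(\pi_1(u),\pi_2(v))$, and set $\cmat=\left[I_{n_1}\krn\cmat_2,\ \cmat_1\krn I_{n_2}\right]$; the columns of the first block are naturally indexed by $V_1\times\cols_2$ and those of the second by $\cols_1\times V_2$, so declaring $r(s):=n_2 r_1(s)$ for $s\in\src_1$ and $r(s):=n_1 r_2(s)$ for $s\in\src_2$ (with $\cols(s)=\cols_1(s)\times V_2$, resp.\ $V_1\times\cols_2(s)$) makes the column set equal to $\cols$. The key combinatorial observation, immediate from the definition of $\p$ and the lexicographic order, is that $N_1(u)\times N_2(v)\subseteq N_{G_1\p G_2}((u,v))$ for every $(u,v)$, and likewise $K_1(u)\times K_2(v)$ is a clique of $G_1\p G_2$ contained in $N_{G_1\p G_2}((u,v))$ when $K_1(u),K_2(v)$ are the cliques witnessing property~1 of certifiability. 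Given the witnessing vectors $a_u$ for $C_1$ and $a_v$ for $C_2$, I would take $a_{(u,v)}:=a_u\krn a_v$: its support is $\supp(a_u)\times\supp(a_v)$, which lies inside $N_1(u)\times N_2(v)$ (resp.\ the product clique) and contains $(u,v)$, and by the mixed-product identity $(A\krn B)(C\krn D)=(AC)\krn(BD)$ we get $a_{(u,v)}\cmat=\bigl[\,a_u\krn(a_v\cmat_2),\ (a_u\cmat_1)\krn a_v\,\bigr]$, each block having support inside $\cols(f^{-1}((u,v)))$ since $\supp(a_v\cmat_2)\subseteq\cols_2(f_2^{-1}(v))$ and $\supp(a_u\cmat_1)\subseteq\cols_1(f_1^{-1}(u))$. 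This shows $C$ is a linear network code and establishes property~1 of $\rho$-certifiability for $C$.

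\medskip
\noindent\emph{Decodability.} With $\csub_1,\csub_2$ the fixed-message sets for $C_1,C_2$, I would take the (pairwise disjoint) union $\csub:=(\csub_1\times V_2)\cup\bigl((\cols_1\setminus\csub_1)\times f_2(\snk_2)\bigr)\cup(V_1\times\csub_2)$, so that $|\cols|-|\csub|=n_1p_2+n_2p_1-p_1|f_2(\snk_2)|=p$ and $\cols\setminus\csub=\bigl((\cols_1\setminus\csub_1)\times(V_2\setminus f_2(\snk_2))\bigr)\cup\bigl(V_1\times(\cols_2\setminus\csub_2)\bigr)$. For $m=(v_1,m_2)$ with $m_2\in\cols_2\setminus\csub_2$ I would decode with $\ind{v_1}\krn d_{m_2}$: the $I_{n_1}\krn\cmat_2$ block of the product is $\ind{v_1}\krn(d_{m_2}\cmat_2)$, supported in $\{m\}\cup(V_1\times\csub_2)$, and the $\cmat_1\krn I_{n_2}$ block is $\cmat_1[v_1,-]\krn d_{m_2}$, supported in $\cols_1\times f_2(\snk_2)\subseteq\csub$. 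For $m=(m_1,v_2)$ with $m_1\in\cols_1\setminus\csub_1$ and $v_2\notin f_2(\snk_2)$ I would decode with $d_{m_1}\krn\mu$, where $\mu$ satisfies $v_2\in\supp(\mu)\subseteq\{v_2\}\cup f_2(\snk_2)$ and $\supp(\mu\cmat_2)\subseteq\csub_2$; such a $\mu$ exists because decodability of $C_2$ forces the span of the rows $\{\cmat_2[w,-]:w\in f_2(\snk_2)\}$ to cover $\F^{\cols_2}$ modulo the coordinate subspace spanned by $\csub_2$, so $\cmat_2[v_2,-]$ lies there and can be cancelled outside $\csub_2$ by subtracting a suitable combination of $f_2(\snk_2)$-rows from $\ind{v_2}$. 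Then the $I_{n_1}\krn\cmat_2$ block is $d_{m_1}\krn(\mu\cmat_2)$, supported in $V_1\times\csub_2$, the $\cmat_1\krn I_{n_2}$ block is $(d_{m_1}\cmat_1)\krn\mu$, supported in $(\{m_1\}\cup\csub_1)\times(\{v_2\}\cup f_2(\snk_2))\subseteq\{m\}\cup\csub$, and $\supp(d_{m_1}\krn\mu)\subseteq f_1(t_i)\times V_2=f(t_i)$. In both cases $m\in\supp(d_m\cmat)$, so $C$ is decodable with rate $p$.

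\medskip
\noindent\emph{Certifiability, the rank bound.} Let $M$ be a multicut of $N_1\p N_2$; I need $\rk(\cmat^TI_M)\ge\rho$. First I would restrict to the factors: for each $u\in V_1$ the slice $M^u:=\{v:(u,v)\in M\}$ is a multicut of $N_2$ (any $\src_2$-path, lifted into the single slice $\{u\}\times V_2$, is an $s$--$t$ path of $N_1\p N_2$ and so must be hit by $M$), hence $\rk(\cmat_2^TI_{M^u})\ge\rho_2$; symmetrically each $M_v:=\{u:(u,v)\in M\}$ is a multicut of $N_1$ and $\rk(\cmat_1^TI_{M_v})\ge\rho_1$. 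Reading $\cmat^TI_M$ block by block, in the $I_{n_1}\krn\cmat_2$ columns the rows of $M$ in $\{u\}\times M^u$ carry a disjoint copy of $\cmat_2[M^u,-]$, and in the $\cmat_1\krn I_{n_2}$ columns the rows in $M_v\times\{v\}$ carry a disjoint copy of $\cmat_1[M_v,-]$; thus the column space of $\cmat^TI_M$ equals $\mathcal{A}+\mathcal{B}$ with $\dim\mathcal{A}=\sum_u\rk(\cmat_2^TI_{M^u})\ge n_1\rho_2$ and $\dim\mathcal{B}=\sum_v\rk(\cmat_1^TI_{M_v})\ge n_2\rho_1$. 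By inclusion--exclusion, everything then reduces to showing $\dim(\mathcal{A}\cap\mathcal{B})\le\rho_1|f_2(\src_2)|$.

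\medskip
\noindent\emph{The main obstacle.} This last inequality is the crux, and it is precisely where property~1 of certifiability (cliques, not just predecessor sets) is used. For $v\notin f_2(\src_2)$ we have $f_2^{-1}(v)=\emptyset$, so the witnessing vector at $v$ for $C_2$ satisfies $a_v\cmat_2=0$ with $v\in\supp(a_v)\subseteq K_2(v)$; equivalently, the row $\cmat_2[v,-]$ is a fixed linear combination of the strictly $\pi_2$-earlier rows $\{\cmat_2[v',-]:v'\in K_2(v)\setminus\{v\}\}$. I would feed these relations into a vector $z\in\mathcal{A}\cap\mathcal{B}$: membership in $\mathcal{A}$ writes $z$ as $z_{(u,v)}=\tilde z_u[v]$ for $(u,v)\in M$ with each $\tilde z_u$ in the column space of $\cmat_2$, so $z$ is determined by the slices $v\in f_2(\src_2)$ alone; membership in $\mathcal{B}$ forces, in each such slice, the data $(\tilde z_u[v])_{u\in M_v}$ into the column space of $\cmat_1[M_v,-]$, and since $M_v$ is a multicut of $N_1$ the complementary $\rk(\cmat_1^TI_{M_v})\ge\rho_1$ directions are already used up, leaving at most $\rho_1$ free parameters per slice of $f_2(\src_2)$, hence $\dim(\mathcal{A}\cap\mathcal{B})\le\rho_1|f_2(\src_2)|$. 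I expect the delicate point to be making the propagation of the $\pi_2$-ordered relations rigorous, since the relation at $v$ refers to the clique $K_2(v)$ and one must control how its members sit inside the slices $M^u$ of the product multicut — which is exactly why the definition insists on cliques $K_2(v)$ rather than the larger sets $N_2(v)$, as the strong product multiplies cliques. Granting the bound, $\rk(\cmat^TI_M)=\dim\mathcal{A}+\dim\mathcal{B}-\dim(\mathcal{A}\cap\mathcal{B})\ge n_1\rho_2+n_2\rho_1-\rho_1|f_2(\src_2)|=\rho$ for every multicut $M$, so $C$ is $\rho$-certifiable.
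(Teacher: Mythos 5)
Your product code and the decodability argument are correct; the decodability part even takes a slightly different (and clean) route from the paper's, which repairs the asymmetric messages $(m_1,v)$ by subtracting a combination of the already-built decoders $\ind{u}\krn d^2_{m_2}$ with $u\in f_1(t_i)$, whereas you cancel on the $N_2$ side with one vector $\mu$ supported in $\{v\}\cup f_2(\snk_2)$; both are valid. The problem is the certifiability part, where your rank bound has a genuine gap at exactly the step you flag as ``delicate.'' Everything you actually prove there uses only that each slice $M^u=\{v:(u,v)\in M\}$ is a multicut of $N_2$ and each $M_v$ a multicut of $N_1$, and these slice conditions alone cannot yield $\rk(\cmat^T I_M)\ge\rho$: take $N_1=N_2=\mathcal{P}_2$ with $\cmat_1=\cmat_2=\mathbf{1}_2$ and $M=\{(1,1),(2,2)\}$; every row and column slice is a nonempty (hence valid) multicut of the path, yet the two rows $\cmat[(1,1),-]$ and $\cmat[(2,2),-]$ span only a $2$-dimensional space while $\rho=3$. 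Of course this $M$ is not a multicut of $\mathcal{P}_2\p\mathcal{P}_2$, but only because of the diagonal adjacencies of the strong product --- information your argument never invokes. The paper injects that information through Lemma \ref{lem:extracut} (for each $u$ there is a multicut $M_u$ of $N_2$ with the whole tube $K_1(u)\times M_u$ inside $M$), builds from it a matrix $B$ in the column space of $I_M$, and finishes with Lemma \ref{lem:lowertri}; some substitute for that lemma must appear in any correct completion of your route.

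Concretely, the unproved step is $\dim(\mathcal{A}\cap\mathcal{B})\le\rho_1|f_2(\src_2)|$ (with $\mathcal{A},\mathcal{B}\subseteq\F^{M}$ the column spaces of the two column blocks of $I_M^T\cmat$, which is the reading under which $\rk(\cmat^TI_M)=\dim\mathcal{A}+\dim\mathcal{B}-\dim(\mathcal{A}\cap\mathcal{B})$ holds), and your sketch for it has two holes. First, ``$z$ is determined by the slices $v\in f_2(\src_2)$'' does not follow: membership in $\mathcal{A}$ gives $z_{(u,v)}=\tilde z_u[v]$ on $M$ with $\tilde z_u$ in the column space of $\cmat_2$, and such a vector is determined by its values on \emph{all} of $f_2(\src_2)$, whereas $z$ records $\tilde z_u$ only on $M^u$, hence only on $M^u\cap f_2(\src_2)$; propagating the $\pi_2$-relations at $v\notin f_2(\src_2)$ (which involve $K_2(v)$) into statements about coordinates of $z$ requires knowing how $K_2(v)\times M_v$-type sets sit inside $M$, i.e.\ precisely the content of Lemma \ref{lem:extracut}, which you acknowledge but never prove or use. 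Second, ``at most $\rho_1$ free parameters per slice'' is backwards: membership in $\mathcal{B}$ puts the slice data $(z_{(u,v)})_{u\in M_v}$ in the column space of $\cmat_1[M_v,-]$, whose dimension is $\rk(\cmat_1^TI_{M_v})\ge\rho_1$, not $\le\rho_1$. This second issue is repairable by refined accounting (bound the intersection by $\sum_{v\in f_2(\src_2)}\rk(\cmat_1^TI_{M_v})$ and use the exact value $\dim\mathcal{B}=\sum_{v}\rk(\cmat_1^TI_{M_v})$, which still gives $\rho$), but the first issue is essential: without an analogue of Lemma \ref{lem:extracut} the crux of part~2 of the theorem is not established.
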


Before proving the main theorem we show how it applies to give an improvement to the Saks et al. construction.  The network in the construction of Saks et al. is the $k$-fold strong product of the network $\mathcal{P}_n = (P_n,\src = \{s\}, \snk = \{t\},f)$ where $P_n = p_1p_2\ldots p_n$ is a path of length $n$ and $f(s) = p_1, f(t) = p_n$.  Let $\mathcal{P}_n^{\p k}$ denote the Saks et al. graph parameterized by $k$ and $n$.

\begin{cor}
The size of the minimum multicut and the rate of the optimal network coding solution of $\mathcal{P}_n^{\p k}$ is $n^k-(n-1)^k$. 
\label{cor:saks}
\end{cor}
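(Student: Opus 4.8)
The plan is to build the required code on $\mathcal{P}_n^{\p k}$ by iterating Theorem~\ref{thm:mainresult} from a trivial starting code, track the decodable rate and the certifiable value through the recursion, and then close the argument with an explicit cut of matching size.

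First I would fix the base case. On $\mathcal{P}_n$ there is a unique $s$--$t$ path $s, p_1, \ldots, p_n, t$; this is a single node-disjoint path, so Observations~\ref{obs:dis1} and~\ref{obs:dis2} give a linear network code $C^{(1)}$ on $\mathcal{P}_n$ that is decodable with rate $1$ and is $1$-certifiable. Here $n_1 = n$ for $\mathcal{P}_n$, and $|f(\src)| = |f(\snk)| = 1$. Next, using associativity of $\p$, write $\mathcal{P}_n^{\p j} = \mathcal{P}_n^{\p (j-1)} \p \mathcal{P}_n$ and apply Theorem~\ref{thm:mainresult} with $N_1 = \mathcal{P}_n^{\p(j-1)}$ (so $n_1 = n^{j-1}$) and $N_2 = \mathcal{P}_n$ (so $n_2 = n$ and $|f_2(\src_2)| = |f_2(\snk_2)| = 1$). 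If $C^{(j-1)}$ is decodable with rate $p^{(j-1)}$ and is $\rho^{(j-1)}$-certifiable, the theorem produces a code $C^{(j)}$ on $\mathcal{P}_n^{\p j}$ that is decodable with rate $p^{(j)} = n^{j-1} + (n-1)\,p^{(j-1)}$ and is $\rho^{(j)}$-certifiable with $\rho^{(j)} = n^{j-1} + (n-1)\,\rho^{(j-1)}$. These are the same recurrence with the same base value $p^{(1)} = \rho^{(1)} = 1 = n - (n-1)$, and a one-line induction gives $p^{(k)} = \rho^{(k)} = n^k - (n-1)^k$. By the certifiable property, $|M| = \rk(I_M) \ge \rk(\cmat^T I_M) \ge \rho^{(k)}$ for every multicut $M$, so the minimum multicut is at least $n^k - (n-1)^k$; and decodability of $C^{(k)}$ shows the optimal network coding rate is at least $n^k - (n-1)^k$.

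Finally I would supply the matching upper bounds. Identifying $V(\mathcal{P}_n^{\p k})$ with $[n]^k$, the source and sink of the $\ell$-th factor attach to $\{v : v_\ell = 1\}$ and $\{v : v_\ell = n\}$ respectively, so $M := \{v \in [n]^k : v_\ell = n \text{ for some } \ell\} = [n]^k \setminus [n-1]^k$ contains every $f(t_\ell)$; deleting $M$ leaves each sink with no incoming edge, hence $M$ disconnects every source from every sink and is in particular a multicut. Since $|M| = n^k - (n-1)^k$, this bounds the minimum multicut from above, and by the standard entropy bound that a source--sink-separating cut upper bounds the network coding rate, it also bounds the rate from above. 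Combining with the previous paragraph yields that both quantities equal $n^k - (n-1)^k$.

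The routine parts are the two recurrences and the count $|[n]^k \setminus [n-1]^k|$. The step I expect to need the most care is checking that the hypotheses of Theorem~\ref{thm:mainresult} persist along the iteration: that the product code $C^{(j-1)}$ is genuinely a linear network code in the sense of Definition~\ref{def:netcod} so that it is eligible to play the role of $C_1$ at the next stage, and that the clique refinement built into $\rho$-certifiability (property~1 of Definition~\ref{def:cert}) survives the product. Both should follow from Theorem~\ref{thm:mainresult} itself, but they are what make the iteration legitimate, so I would verify them explicitly before running the induction.
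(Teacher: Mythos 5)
Your proof is correct and follows essentially the same route as the paper: the base code on $\mathcal{P}_n$ from Observations~\ref{obs:dis1} and~\ref{obs:dis2}, induction on $k$ via Theorem~\ref{thm:mainresult}, and the sink-side set $f(\snk)$ of size $n^k-(n-1)^k$ serving both as a multicut and as a source--sink-separating cut bounding the coding rate from above. The only (harmless, arguably cleaner) difference is that you put $\mathcal{P}_n$ in the role of $N_2$ rather than $N_1$, so $|f_2(\src_2)|=|f_2(\snk_2)|=1$ is immediate and the recurrence needs no side computation of $|f(\snk_{\mathcal{P}_n^{\p k}})|$.
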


This bound on the multicut is tight and an improvement over the lower bound of $k(n-1)^{k-1}$ given in Saks et al.\cite{Saks}.



\begin{proof}
From Observations \ref{obs:dis1} and \ref{obs:dis2} we know that $\mathcal{P}_n$ has a linear network code $$C = (\F_2, r: r(s) = 1, \pi: \pi(p_i) = i, \mathbf{1}_n)$$ that is decodable with rate $1$ and $1$-certifiable.

We will fix $n$ and apply Theorem \ref{thm:mainresult} inductively on $k$ to show that there is a code $C_k$ for $\mathcal{P}_n^{\p k}$ is $\rho_k$-certifiable and decodable with rate $p_k$, where  $\rho_k = p_k = n^k - (n-1)^k$.  The preceding paragraph establishes that $C_1 = C$ satisfies the base case.  Now, assuming true for $k$, we show for $k+1$:

We apply Theorem \ref{thm:mainresult} to $N_1 = \mathcal{P}_n$ and $N_2 = \mathcal{P}_n^{\p k}$.  By our inductive hypotheis, we have codes $C_1$ and $C_2$ with the required conditions, and now the theorem implies that 
$$\begin{aligned}
\rho_{k+1} & = \rho_k n + \rho n^k - \rho_k |f(\src)| \nonumber \\
& = \rho_k(n-1) + n^k \;\;\; \text{by } \rho = 1, |f(\src)| = 1 \nonumber \\
& = n^{k+1} - (n-1)^{k+1} 
\end{aligned}
$$
The same proof applies to the coding rate because $p = 1, |f(\snk)| = 1$.

Further, note that $|f(\snk_{\mathcal{P}_n^{\p k}})| = n^k-(n-1)^k$ as well, because For $A \subset V_1$ and $B \subset V_2$, the set $A \times B$ has cardinality $|A|n_2 + |B|n_1 - |A||B|$, and again the same inductive proof holds because $|f(\snk)|=1$.  This gives us that $f(\snk_{\mathcal{P}_n^{\p k}})$ is an optimal multicut.  Additionally, $f(\snk_{\mathcal{P}_n^{\p k}})$ cuts all sources from all sinks and therefore gives a tight upper bound on the coding rate. 
\end{proof}

The same proof also implies the following more general corollary, giving us a large set of graphs where the coding rate is a lower bound on the multicut and better than the flow bound.

\begin{cor}
If a \inst $N = (G, \src, \snk, f)$ has a flow solution consisting of $r$ disjoint paths, and $|f(\src)|=|f(\snk)| = r$, then $N^{\p k}$ has an optimal coding rate equal to the size of the optimal multicut equal to $n^k - (n-r)^k$. 
\end{cor}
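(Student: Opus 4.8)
The plan is to run verbatim the inductive argument behind Corollary~\ref{cor:saks}, replacing the single-source/single-sink path $\mathcal{P}_n$ by the given $r$-path instance $N$. For the base case $k=1$: by Observations~\ref{obs:dis1} and~\ref{obs:dis2}, the flow solution of $N$ consisting of $r$ node-disjoint paths is a linear network code $C_1$ for $N$ that is decodable with rate $r$ and is $r$-certifiable, and $r = n^1 - (n-r)^1$.

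For the inductive step, suppose $N^{\p k}$ has a code $C_k$ that is decodable with rate $p_k$ and $\rho_k$-certifiable, with $p_k=\rho_k=n^k-(n-r)^k$. Apply Theorem~\ref{thm:mainresult} to the ordered pair $N_1 := N^{\p k}$ (with code $C_k$, so $n_1 = n^k$, $\rho_1=\rho_k$, $p_1=p_k$) and $N_2 := N$ (with code $C_1$, so $n_2=n$, $\rho_2=p_2=r$); here $N_1\p N_2$ is $N^{\p(k+1)}$. With this ordering the quantities $|f_2(\src_2)|$ and $|f_2(\snk_2)|$ occurring in the theorem are the fixed constant $|f(\src_N)| = |f(\snk_N)| = r$, so the theorem yields a code $C_{k+1}$ for $N^{\p(k+1)}$ with
\[
\rho_{k+1} = n\,\rho_k + n^k r - \rho_k r = r\,n^k + (n-r)\rho_k,\qquad
p_{k+1} = r\,n^k + (n-r)p_k ,
\]
and substituting $\rho_k=p_k=n^k-(n-r)^k$ makes both sides equal $n^{k+1}-(n-r)^{k+1}$, closing the induction. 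Thus $N^{\p k}$ admits a code that is simultaneously decodable with rate $n^k-(n-r)^k$ and $(n^k-(n-r)^k)$-certifiable.

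It remains to exhibit matching upper bounds. Let $M^\star := f(\snk_{N^{\p k}})$ be the set of all nodes adjacent to a sink of $N^{\p k}$. A one-line induction on $k$, using the product definition of $f$ together with inclusion--exclusion (the intersection $f_1(\snk_1)\times V_2 \,\cap\, V_1\times f_2(\snk_2)$ equals $f_1(\snk_1)\times f_2(\snk_2)$, so the cardinality obeys $s_{k+1} = s_k n + n^k r - s_k r$ from the base value $s_1 = r$), gives $|M^\star| = n^k-(n-r)^k$. Deleting $M^\star$ destroys every edge entering a sink, so $M^\star$ is a multicut and the minimum multicut has size $\le n^k-(n-r)^k$; conversely the $(n^k-(n-r)^k)$-certifiable property forces $|M| = \rk(I_M)\ge\rk(\cmat^T I_M)\ge n^k-(n-r)^k$ for every multicut $M$, so the minimum multicut equals $n^k-(n-r)^k$. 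Since $M^\star$ also separates all sinks from all sources, the elementary entropy bound caps the network coding rate at $|M^\star| = n^k-(n-r)^k$, which is matched by the decodable rate of $C_k$; hence the optimal coding rate is exactly $n^k-(n-r)^k$.

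I do not expect a genuine obstacle: this is the direct generalization of Corollary~\ref{cor:saks}, and every recursion that appears is the same one, $x_{k+1} = r\,n^k + (n-r)x_k$ with $x_1 = r$, solved by $x_k = n^k-(n-r)^k$. The only mild care needed is to fix the order of the two factors in Theorem~\ref{thm:mainresult} so that the subtracted term $|f_2(\cdot)|$ stays equal to the constant $r$ rather than growing with $k$ (either order gives the stated values, but this one keeps the algebra trivial).
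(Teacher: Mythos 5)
Your proposal is correct and is essentially the paper's own argument: the paper proves this corollary by observing that the inductive proof of Corollary~\ref{cor:saks} (base case from Observations~\ref{obs:dis1} and~\ref{obs:dis2}, recursion from Theorem~\ref{thm:mainresult}, and the upper bound via $f(\snk_{N^{\p k}})$ being a multicut that separates all sources from all sinks) carries over verbatim with $|f(\src)|=|f(\snk)|=r$ in place of $1$. Your remark about ordering the factors so the subtracted term is the constant $r$ matches how the paper's recursion is actually computed, so there is no discrepancy.
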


The proof of the Theorems mostly falls out of manipulation of the Kronecker product, in particular, we repeatedly use of the mixed-product property which states that $(A \krn B)(C \krn D) = AC \krn BD$ if the dimensions match correctly.

To aid in the proof of the second part of Theorem \ref{thm:mainresult}, we begin with some definitions and lemmas whose proofs will come later.

\begin{definition}
A \emph{lower block triangular matrix} is a block matrix such that the blocks above the main diagonal blocks are identically zero.
\end{definition}

\begin{lemma}
If the main diagonal blocks of a lower block triangular matrix have ranks $r_1, r_2, \ldots, r_l$ respectively, then the lower block triangular matrix has rank at least $\sum_{i=1}^l r_i$.
\label{lem:lowertri}
\end{lemma}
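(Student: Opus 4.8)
The plan is to exhibit an invertible square submatrix of the lower block triangular matrix $M$ of order $\sum_{i=1}^l r_i$; since the rank of a matrix equals the largest order of one of its invertible submatrices, this immediately gives $\rk(M)\ge\sum_{i=1}^l r_i$. Write $M$ in block form with blocks $M_{ij}$ for $i,j\in[l]$, so that $M_{ij}=0$ whenever $j>i$ and $\rk(M_{ii})=r_i$. For each $i$, since $M_{ii}$ has rank $r_i$, choose a set $R_i$ of $r_i$ row indices lying in block-row $i$ and a set $C_i$ of $r_i$ column indices lying in block-column $i$ such that the $r_i\times r_i$ submatrix $M_{ii}[R_i,C_i]$ is invertible. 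Put $R=\bigcup_{i=1}^l R_i$ and $C=\bigcup_{i=1}^l C_i$, and consider the square submatrix $M[R,C]$ of order $\sum_{i=1}^l r_i$.

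The key step is to observe that $M[R,C]$ is itself lower block triangular with respect to the partition induced by the $R_i$ and $C_i$: its $(i,j)$ block is $M[R_i,C_j]$, which for $j>i$ is a submatrix of the zero block $M_{ij}$ and hence zero, while its $i$-th diagonal block is $M[R_i,C_i]=M_{ii}[R_i,C_i]$, which is invertible by construction. I would then invoke the standard fact that the determinant of a block lower triangular square matrix is the product of the determinants of its diagonal blocks (provable by a short induction expanding along block columns, or simply cited); since each diagonal block of $M[R,C]$ is invertible, this product is nonzero, so $M[R,C]$ is invertible, and the bound follows.

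An equivalent route is a direct induction on $l$, peeling off the first block-row and block-column so that $M=\left(\begin{smallmatrix}M_{11}&0\\ B&M'\end{smallmatrix}\right)$ with $M'$ the trailing lower block triangular matrix, and using only the elementary inequality $\rk\left(\begin{smallmatrix}A&0\\ B&C\end{smallmatrix}\right)\ge\rk(A)+\rk(C)$; the latter is checked by taking $\rk(A)$ columns that are independent in the $A$-part and $\rk(C)$ columns that are independent in the $C$-part, and using the zero block to conclude the union is independent. I do not anticipate any genuine obstacle here: the only point that needs a line of care is verifying that the chosen submatrix inherits the block-triangular structure, which is immediate from the $R_i$ and $C_i$ being confined to block-row $i$ and block-column $i$ respectively.
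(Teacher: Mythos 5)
Your proof is correct, but it takes a genuinely different route from the paper. The paper argues by explicit Gaussian elimination: it converts the first diagonal block to an identity, deletes zero rows and columns, clears out the entries below it (observing that, because the rows of the first block vanish to the right of its columns, these row operations disturb only the first group of columns), and iterates over the remaining blocks, ending with an identity matrix of size $\sum_i r_i$ sitting inside the matrix. You instead select, for each diagonal block, an $r_i \times r_i$ invertible submatrix with rows and columns confined to block-row and block-column $i$, note that the union of these selections is again lower block triangular with square invertible diagonal blocks, and conclude invertibility of the chosen $\bigl(\sum_i r_i\bigr) \times \bigl(\sum_i r_i\bigr)$ submatrix from the fact that the determinant of a block lower triangular matrix is the product of the determinants of its diagonal blocks; your alternative induction using $\rk\bigl(\begin{smallmatrix} A & 0 \\ B & C \end{smallmatrix}\bigr) \ge \rk(A) + \rk(C)$ is also sound, and the verification you sketch for that inequality (the zero block forces the coefficients on the first group of columns to vanish first) is exactly right. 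What your route buys is a shorter, more standard argument that avoids the bookkeeping about which entries change under elimination and works verbatim for non-square diagonal blocks; what the paper's elimination argument buys is an explicit constructive reduction exhibiting the identity submatrix, at the cost of having to justify that each round of row operations leaves the later column groups untouched.
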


The following Lemma is the generalization of a critical Lemma from the Saks et al. proof.  

\begin{lemma}
For every multicut $M$ of $N_1 \p N_2$ and every vertex $u \in V_1$ there is a multicut $M_u$ of $N_2$ such that $K_1(u) \times  M_u \subseteq M$.
\label{lem:extracut} 
\end{lemma}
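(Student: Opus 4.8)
The plan is to produce the multicut $M_u$ explicitly and then verify it by a path-lifting argument into the strong product. Define
$$M_u := \{\, v \in V_2 \;:\; K_1(u) \times \{v\} \subseteq M \,\}.$$
Then $K_1(u) \times M_u = \bigcup_{v \in M_u}\bigl(K_1(u)\times\{v\}\bigr) \subseteq M$ by construction, so the required containment is immediate and the whole content of the lemma is to show that $M_u$ is a multicut of $N_2$.

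Suppose not: then some source $s_i \in \src_2$ has a simple path $Q = (s_i, v_1, v_2, \ldots, v_\ell, t_i)$ to its mate $t_i \in \snk_2$ in the instance network $N_2$ with $v_j \notin M_u$ for all $j$. Since $v_j \notin M_u$, for each $j$ we may pick $u_j \in K_1(u)$ with $(u_j,v_j) \notin M$. I claim $Q' := (s_i,\, (u_1,v_1),\, (u_2,v_2),\, \ldots,\, (u_\ell,v_\ell),\, t_i)$ is an $s_i$--$t_i$ path in $N_1 \p N_2$ avoiding $M$, contradicting that $M$ is a multicut of $N_1 \p N_2$. For the internal edges, when $1 \le j < \ell$ the vertices $v_j, v_{j+1}$ are distinct and adjacent in $G_2$ (consecutive on the $G_2$-portion of a simple path), while $u_j, u_{j+1}$ both lie in the clique $K_1(u)$ and hence are equal or adjacent in $G_1$; by the definition of $E(G_1 \p G_2)$ the pair $\bigl((u_j,v_j),(u_{j+1},v_{j+1})\bigr)$ is therefore an edge of $G_1 \p G_2$. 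For the terminal edges, $v_1 \in f_2(s_i)$ and $v_\ell \in f_2(t_i)$ since $Q$ is a path in $N_2$, and as $s_i \in \src_2$, $t_i \in \snk_2$ we have $f(s_i) = V_1 \times f_2(s_i)$ and $f(t_i) = V_1 \times f_2(t_i)$, so $(u_1,v_1) \in f(s_i)$ and $(u_\ell,v_\ell) \in f(t_i)$, making $(s_i,(u_1,v_1))$ and $((u_\ell,v_\ell),t_i)$ edges of $N_1 \p N_2$. Finally $Q'$ avoids $M$: each $(u_j,v_j) \notin M$ by choice, and $s_i,t_i$ are source/sink nodes, not vertices of $G_1 \p G_2$. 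The short case $\ell=1$ (so $v_1 \in f_2(s_i)\cap f_2(t_i)$) is handled identically, and if $\src_2 = \emptyset$ there is nothing to cut so $M_u$ is vacuously a multicut.

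The step needing the most care is the verification of the internal edges of $Q'$: this is precisely where one uses that $K_1(u)$ is a \emph{clique} rather than an arbitrary subset of $N_1(u)$, and it is the reason Definition~\ref{def:cert} insists on cliques — without cliqueness the lifted vertices $(u_j,v_j)$ and $(u_{j+1},v_{j+1})$ need not be adjacent in the strong product, and the lifting fails.
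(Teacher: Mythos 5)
Your proof is correct and follows essentially the same route as the paper: you define $M_u$ exactly as the paper's set $C = \{v \in V_2 \mid K_1(u)\times\{v\} \subseteq M\}$ and, assuming it fails to be a multicut, lift an uncut $s_i$--$t_i$ path of $N_2$ into $N_1 \p N_2$ by choosing for each $v_j$ some $u_j \in K_1(u)$ with $(u_j,v_j)\notin M$, contradicting that $M$ is a multicut. Your write-up is in fact slightly more careful than the paper's, since you explicitly verify the strong-product adjacency via the cliqueness of $K_1(u)$ and the terminal edges via $f(s_i)=V_1\times f_2(s_i)$.
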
 
Note that by the symmetry of the product operation, Lemma \ref{lem:extracut} also implies that the result holds when we switch the roles of $N_1$ and $N_2$.

Now we come to proving our main theorem.  To avoid confusion, we will reserve $u$ to denote nodes in $V_1$ and $v$ for $V_2$.

\begin{proof}[Proof of Theorem \ref{thm:mainresult}]
We define a linear network code $C = (\F,r,\pi, \cmat)$ on $N_1 \p N_2 = (G_1 \p G_2, \src, \snk, f)$.  It has
$$ r(s) = 
\begin{cases}
r_1(s)n_2  & \mbox{if $s \in \src_1$} \\
r_2(s)n_1  & \mbox{if $s \in \src_2$} 
\end{cases}
$$  
The ordering $\pi$ will be given by $\pi((u,v)) = n_2(\pi_1(u)-1)+\pi_2(v)$, which corresponds to a lexicographic ordering of $(\pi_1(u), \pi_2(v))$, and 
$\cmat = \left[I_{n_1} \krn \cmat_2, \cmat_1 \krn I_{n_2}\right].$

In $\cmat$, the rows are labeled by vertices $(u,v) \in V_1 \times V_2$ and the columns are labeled with messages $\cols = (\cols_1 \times V_2) \cup (V_1 \times \cols_2)$. 

\noindent \textbf{$\mathbf{C}$ is a linear network code for $\mathbf{N_1 \p N_2}$}

We show that $C$ satisfies Definition \ref{def:netcod}.  Let $a_u$ and $a_v$ be the vectors that satisfy Definition~\ref{def:netcod} for $u \in V_1$ and $v \in V_2$ for $C_1$ and $C_2$ respectively.  Now, set $a = a_u \krn a_v$.  We claim that $a$ satisfies Definition~\ref{def:netcod} for $(u,v) \in V_1 \times V_2$ for $N_1 \p N_2$.   

First, note that $\supp(a) = \supp(a_u) \times \supp(a_v)$, giving us that $\supp(a) \subseteq N(u) \times N(v) \subseteq N((u,v))$ as wanted.  Additionally, $a_u[u] \ne 0, a_v[v] \ne 0$ implies that $a[(u,v)] \ne 0$, and $\{(u,v)\} \subseteq \supp(a)$.  

The fact that $\supp(a\cmat) \subseteq \cols(f^{-1}((u,v)))$ follows from the mixed-product property:
$$\begin{aligned}
\supp(a\cmat) & = 
\supp( a_u \krn a_vL_2) \cup \supp( a_uL_1 \krn a_v) \\
& \subseteq (V_1 \times \cols_2(f_2^{-1}(v))) \cup (\cols_1(f_1^{-1}(u)) \times V_2) \\ 
& = \cols(f^{-1}((u,v)))
\end{aligned}$$

\noindent  \textbf{$\mathbf{C}$ is decodable with rate} $\mathbf{p}$

Let $D_1 \subset \cols_1$, $D_2 \subset \cols_2$, and $d^1_{c}, d^2_{c'}$ for $c \in \cols_1 \setminus D_1, c' \in \cols_2 \setminus D_2$ be the subsets and vectors showing that $C_1$ and $C_2$ satisfy Definition~\ref{def:decode}.

We will show that $C$ is $p$-decodable with $$D = (D_1 \times V_2) \cup (V_1 \times D_2) \cup (\cols_1 \times f_2(\snk_2)).$$  Note that 
$|D| = |D_1|n_2 + |D_2|n_1 + (p_1 - |D_1|) |f_2(\snk_2)|,$ and thus $
|\cols| - |D| = p$ as needed.  

We first consider message $m = (u,m_2) = (u,(s'_i,j)) \in (V_1 \times \cols_2)\setminus D.$  Let $d_{m} = \ind{u} \krn d^2_{m_2}.$ We have that $\supp(d_{m}) \subseteq \{u\} \times f_2(t'_i) \subseteq f(t'_i)$.

Additionally,  
$$\begin{aligned}
\supp(d_{m}\cmat) & = \supp\left(\left[ \ind{u} \krn d^2_{m_2}\cmat_2, \ind{u}\cmat_1 \krn d^2_{m_2}\right]\right)\\
& \subseteq \left(\{u\} \times \left( \{m_2\} \cup D_2\right) \right) \cup \left( \cols_1 \times f_2(t_i)\right) \\
& \subseteq \{m\} \cup D 
\end{aligned}$$

Finally, because $\{m_2\} \subseteq \supp(d^2_{m_2}\cmat_2)$, we also have $\{m\} \subseteq \supp(d_m\cmat)$, as needed.

Now we consider message $m = (m_1, v) = ((s_i,j), v) \in \cols_1 \times V_2.$  Similar to the previous case, we define $d_{m} = d^1_{m_1} \krn \ind{v}$ and by parallel arguments, we have that $\supp(d_m) \subseteq f(t_i)$ and $\{m\} \subseteq \supp(d_m\cmat)$.  

To determine the set that contains the support of $d_m\cmat$ we can write down the same set as before, but because $D$ is not symmetric, we can't come to our desired conclusion. 
$$
\begin{aligned}
\supp(d_{m}\cmat) & = \supp \left(\left[ d^1_{m_1} \krn \ind{v}\cmat_2, d^1_{m_1}\cmat_1 \krn \ind{v}\right] \right) \\
& \subseteq \left( f_1(t_i) \times \cols_2 \right) \cup \left( \left( m_1 \times D_1 \right)  \times \{v\} \right ). 
\end{aligned}
$$
Instead, we will need to modify $d_m$ to eliminate the component of the support in $f_1(t_i) \times \cols_2$.  In the previous case we showed that the vector  $d_{(u,m_2)}$ has $\{(u,m_2)\} \subseteq \supp(d_{(u,m_2)}\cmat) \subseteq \{(u,m_2)\} \cup D$.  Thus, we can set $d'_m$ to be $d_m$ minus an appropriate linear combination of vectors in $Q = \{d_{(u,m_2)} | u \in f_1(t_i), m_2 \in \cols_2\}$ to obtain the desired support for $d_m'\cmat$.  Vectors in $Q$ have support in $f_1(t_i) \times \cols_2 = f(t_i)$, as needed.

\noindent  \textbf{$\mathbf{C}$ is $\mathbf{\rho}$-certifiable}


First, showing that Definition~\ref{def:netcod} goes through if $N(u)$ is replaced with clique $K(u)$ is identical to the proof above along with the observation that if $K_1$ and $K_2$ are cliques in $N_1$ and $N_2$ then $K_1 \times K_2$ is a clique in $N_1 \p N_2$. 

It remains to show that $\rk(\cmat^T I_M) \ge \rho$ for all multicuts $M$ of $N_1 \p N_2$.  

Notice that we can view the matrix $\cmat^T$ as having a block of rows for each $w \in V_1 \cup V_2$; the block of rows associated to $u \in V_1$ is $\ind{u} \krn \cmat^T_2$, and to $v \in V_2$ is $\cmat^T_1 \krn \ind{v}$ (where $\ind{u}$ is the indicator row vector of $u$).
  
We will show that $\rk(\cmat^TB) \ge \rho$ for a matrix $B$ that is in the column space of $I_M$.  This is sufficient because there is some linear transformation $T$ such that $I_MT = B$, implying $\rk(\cmat^TI_M) \ge \rk(\cmat^TI_MT) = \rk(\cmat^TB) \ge \rho$.

The matrix $B$ will have $r_1$ columns for each $v \in V_2$ and $r_2$ columns for each $u \in V_1$.  Let $M_u$, $u \in V_1$ be the multicut of $\{u\} \times V_2$ satisfying the conditions of Lemma \ref{lem:extracut} using the clique $K_1(u)$ that shows certifiability, and similarly for $M_v, v \in V_2$.  The matrix $B$ has a block of columns equal to $a_u^T \krn I_{M_u}$ for each $u \in V_1$, and $I_{M_v} \krn a^T_v$ for $v \in V_2 \setminus f_2(\src_2)$  where $a_u$ and $a_v$ are the vectors satisfying Definition~\ref{def:netcod} with cliques $K_1(u)$ and $K_2(v)$.  The matrix $B$ lies in the column space of $I_{M}$ because $a_u$ and $a_v$ have support within their corresponding cliques and $K_1(u) \times M_u \subseteq M$, $M_v \times K_2(v) \subseteq M$.    

We will show that the matrix $\cmat^TB$ is lower block triangular with $n_1$ diagonal blocks of rank at least $\rho_2$ and $n_2 - |f_2(\src_2)|$ diagonal blocks of rank at least $\rho_1$.  Row blocks of $\cmat^TB$ are indexed by $w \in V_1 \cup V_2$ and column blocks are indexed by $w \in V_1 \cup V_2 \setminus f_2(\src_2)$.  We will assume that the blocks are ordered according to $-\pi_1$ and $-\pi_2$ and blocks associated to elements of $V_1$ precede those of $V_2$.

We have four types of blocks in the product, we analyze all but the lower right block, which is irrelevant for purposes of showing the matrix is lower block triangular.

Block $[u,u']$, $u, u' \in V_1$: \hfill \\
$$\begin{aligned}
\cmat^TB[u,u'] & = (\ind{u} \krn \cmat^T_2) (a^T_{u'} \krn I_{M_{u'}}) \\
& = \ind{u}a_{u'}^T \krn \cmat^T_2I_{M_{u'}}\\
\end{aligned}$$
Thus, block $[u',u]$ has rank at least $\rho_2$ if $u \in \supp(a_{u'}) \subseteq K_1(u')$ and is identically zero otherwise.  In particular, it is zero whenever $\pi_1(u) > \pi_1(u')$ because $u \in K_1(u') \implies \pi_1(u) \le \pi_1(u')$.

Block $[v,v']$, $v, v' \in V_2\setminus f_2(\src_2)$: \hfill \\
$$\begin{aligned}
\cmat^TB[v,v'] & = (\cmat^T_1 \krn \ind{v}) (I_{M_{v'}} \krn a^T_{v'}) \\
& = \cmat^T_1I_{M_{v'}} \krn \ind{v}a_{v'}^T
\end{aligned}
$$
Just as for block $[u,u']$, block $[v,v']$ has rank at least $\rho_1$ if $v \in \supp(a_{v'})$ and is zero otherwise.

Block $[u,v]$, $u \in V_1, v \in V_2\setminus f_2(\src_2)$: \hfill \\
$$
\begin{aligned}
\cmat^TB[u,v] & = (\ind{u} \krn \cmat^T_2) (I_{M_{v}} \krn a_{v}^T)  \\
& = \ind{u}I_{M_{v}}\krn \cmat^T_2a_{v}^T = 0
\end{aligned}
$$

Where the last equality holds because $v \notin f_2(\src_2)$ implies $f_2^{-1}(v) = \emptyset$ and thus $\cols(f_2^{-1}(v)) = \emptyset$, giving $\cmat^T_2a_{v}=0$.

The first two cases above, along with the ordering of blocks so that larger $\pi$ values are on the top left, implies that the top left and lower right quadrants of the matrix $\cmat^TB$ are 
lower block triangular with the required ranks on the diagonal blocks.  
The final case implies that the top right quadrant is all zero, as wanted.
\end{proof}

\begin{proof}[Proof of Lemma \ref{lem:lowertri}]
Let $D_1, \ldots, D_l$ be the diagonal blocks of the matrix with ranks $r_1, \ldots, r_l$ respectively.  We can convert the matrix to the identity matrix starting with the top left diagonal block $D_1$.  First we apply steps of Gaussian elimination that convert $D_1$ to the identity of size $r_1$, possibly with additional rows or columns of all zeros.  We delete the zero rows and columns of $D_1$ from the entire matrix.  Then we subtract rows of $D_1 = I_{r_1}$ from the rest of the matrix so that the only non-zero terms in the first $r_1$ columns are contained in $D_1$.  Notice that the lower block triangular property implies that all of the preceding row operations only change the first $r_1$ columns.  We continue in this fashion for $D_2, \ldots, D_l$.  At the end we are left with an identity matrix of size $\sum_{i=1}^l r_i$, implying that our original matrix has a submatrix of rank at least $\sum_{i=1}^l r_i$.
\end{proof}

\begin{proof}[Proof of Lemma \ref{lem:extracut}]
Suppose for contradiction that there is a multicut $M$ of $N_1 \p N_2$ and some $u \in V_1$ such that for any multicut $M_2$ of $N_2$ there is at least one vertex $(a,b) \in K_1(u) \times M_2, (a,b) \notin M$.  Let $C = \{v \in V_2| K_1(u) \times  v \subseteq M\}$.  By assumption, $C$ is not a multicut of $N_2$, and there exists a source-sink path in $N_2$ that does not intersect with $C$.  Let $p_1\ldots p_l$ be such a path.  For each vertex $v \in V_2 \setminus C$, let $g(v) = (a,v)$ such that $a \in K_1(u), (a,v) \notin M$.  Such a vertex must exist by definition of $C$.  The path $g(p_1) \ldots g(p_l)$ is a source-sink path in $N_1 \p N_2$ that does not intersect $M$, a contradiction.     
\end{proof}


\section{Conclusions and Open Questions}
In this work we give a class of network codes that provide lower bounds on the multicut.  
There are many potential directions to expand this class.  For example, it may be possible to allow for edge-capacitated graphs or arbitrary capacities, or relax the condition of certifiability by strengthening Lemma \ref{lem:extracut}.  

In networks of Saks et al. we show the coding rate exactly matches the multicut, despite the flow being a factor $k$ smaller.  We know a simple example where the network coding rate is less than the multicut, 
but we have no example eliminating the possibility that just two times the network coding rate is always at least the multicut.  In general, does there exist some parameter $\beta$ that is $o(k)$ such that the coding rate scaled up by $\beta$ is always at least the size of the minimum multicut?  


\section{Acknowledgements}
I would like to thank Robert Kleinberg for many useful discussions and Jesse Simons for help editing.

\bibliographystyle{IEEEtranS}
\bibliography{productgraph}
\end{document}